\documentclass{BoTiWeZh19}

\title[On sequences associated to the invariant theory of Lie algebras]{On
sequences associated to the invariant theory \\ of rank two simple Lie
algebras}

\author[A. Bostan, J. Tirrell, B. W. Westbury, Y. Zhang]{Alin
Bostan\thanks{\href{mailto:alin.bostan@inria.fr}{alin.bostan@inria.fr}}\addressmark{1},
Jordan
Tirrell\thanks{\href{mailto:jtirrell@mtholyoke.edu}{jtirrell@mtholyoke.edu}}\addressmark{2}
Bruce W.
Westbury\thanks{\href{mailto:Bruce.Westbury@utdallas.edu}{Bruce.Westbury@utdallas.edu}.}\addressmark{3},
\and Yi Zhang\thanks{\href{mailto:zhangy@amss.ac.cn}{zhangy@amss.ac.cn}. Yi
Zhang was partially supported by the UTD Program P-1-03246, the Natural
Science Foundation of USA Grants CCF-1815108 and CCF-1708884.}\addressmark{3,
4}}

\address{\addressmark{1}Inria, Universit{\'e} Paris-Saclay, 1 rue Honor{\'e}
d'Estienne d'Orves, 91120 Palaiseau, France \\ \addressmark{2}Department of
Mathematics and Computer Science, Washington College, USA \\
\addressmark{3}Department of Mathematical Sciences, The Unversity of Texas at
Dallas, USA \\ \addressmark{4}Department of Mathematical Sciences, Xi'an
Jiaotong-Liverpool University, Suzhou, China}

\abstract{We study two families of sequences, listed in the On-Line
Encyclopedia of Integer Sequences (\href{https://oeis.org}{OEIS}), which are
associated to invariant theory of Lie algebras. For the first family, we prove
combinatorially that the sequences \oeis{A059710} and \oeis{A108307} are
related by a binomial transform. Based on this, we present two independent
proofs of a recurrence equation for \oeis{A059710}, which was conjectured by
Mihailovs. Besides, we also give a direct proof of Mihailovs' conjecture by
the method of algebraic residues. As a consequence, closed formulae for the
generating function of sequence~\oeis{A059710} are obtained in terms of
classical Gaussian hypergeometric functions. Moreover, we show that sequences
in the second family are also related by binomial transforms.}

\keywords{representation theory, Lie algebra, binomial transform, algebraic
residues, computer algebra, creative telescoping}


\begin{document}

\maketitle

\section{Introduction} The representation theory of simple Lie algebras is a
keystone of algebraic and enumerative combinatorics, giving rise to
combinatorial objects such as tableaux, symmetric functions, quantum groups,
crystal graphs, and many others. We are interested in two families of
sequences in OEIS. Sequences in the first family of sequences are called
\emph{octant sequences}. For instance, sequence~\oeis{A059710}, which is the
first octant sequence, is defined to be a sequence associated to fundamental
representations of the exceptional simple Lie algebra~$G_2$, of rank two and
dimension fourteen~\cite{Kuperberg1997}. The second octant sequence
is~\oeis{A108307}~\cite{MR3779614}, and it is defined to be the cardinality of
the set of set partitions of $[n]$ with no enhanced 3-crossing.
Our first contribution is to prove that sequences~\oeis{A059710}
and~\oeis{A108307} are tightly related, by a binomial transform.

\begin{thm}\label{thm:bt} Let $T_3(n)$ and $E_3(n)$ be the $n$-th terms of~\oeis{A059710} and~\oeis{A108307}, respectively. 
Then $E_3$ is the binomial transform of $T_3$, \ie, for $n \geq 0$,
\[ E_3(n) = \sum_{k=0}^n \binom{n}{k}\,T_3(k). \]
\end{thm}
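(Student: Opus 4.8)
The plan is to prove the identity bijectively, by realizing both sides as counting set partitions and then passing between them through the operation of deleting, respectively inserting, singleton blocks.

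First I would fix the two combinatorial models. By definition $E_3(n)=\#\mathcal{E}_n$, where $\mathcal{E}_n$ is the set of set partitions of $[n]$ with no enhanced $3$-crossing. The preliminary step is to exhibit a matching model for $T_3(n)$: I claim $T_3(n)=\#\mathcal{T}_n$, where $\mathcal{T}_n\subseteq\mathcal{E}_n$ is the subset of partitions having, in addition, no singleton block. In invariant-theoretic language this reads $T_3(n)=\dim\mathrm{Hom}_{G_2}(\mathbf{1},V^{\otimes n})$ for the $7$-dimensional fundamental representation $V$, so that $\sum_{k}\binom{n}{k}T_3(k)=\dim\mathrm{Hom}_{G_2}\!\big(\mathbf{1},(\mathbf{1}\oplus V)^{\otimes n}\big)$ by expanding $(\mathbf{1}\oplus V)^{\otimes n}=\bigoplus_{k}\binom{n}{k}V^{\otimes k}$; the theorem then becomes the statement that $(\mathbf{1}\oplus V)$-invariants count enhanced-$3$-crossing-free partitions. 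Equivalently, on the lattice-walk side one may use the path description of A059710 recorded in the OEIS together with the Chen--Deng--Du--Stanley--Yan vacillating-tableau model for A108307: the assertion is that $\mathcal{E}_n$ is counted by the same walks as $\mathcal{T}_n$, but with one extra idle step available. I would obtain this identification either from the known web/tableau model for the $G_2$ invariant sequence, or, for a self-contained treatment, by comparing the vacillating-tableau encoding of $\mathcal{E}_n$ with the analogous encoding of $\mathcal{T}_n$.

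Granting this, the heart of the argument is a single bijection
\[
\Phi\colon\ \mathcal{E}_n\ \longrightarrow\ \bigsqcup_{k=0}^{n}\ \binom{[n]}{k}\times\mathcal{T}_k,
\qquad
\Phi(P)=(S_P,\,P'),
\]
where $\binom{[n]}{k}$ denotes the family of $k$-element subsets of $[n]$, where $S_P$ is the set of elements lying in a block of $P$ of size at least $2$, and where $P'$ is the partition of $[\,|S_P|\,]$ obtained from the restriction $P|_{S_P}$ by the unique order-preserving relabeling. The inverse sends a pair $(S,Q)$ with $|S|=k$ and $Q\in\mathcal{T}_k$ to the partition of $[n]$ whose non-singleton blocks are the images of the blocks of $Q$ under the order-preserving bijection $[k]\to S$, every element of $[n]\setminus S$ becoming a singleton. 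Summing over the fibers gives exactly $E_3(n)=\sum_{k=0}^{n}\binom{n}{k}T_3(k)$. Two points need to be checked, and this is where the (modest) content sits. First, $\Phi$ is well defined: $P'$ still avoids enhanced $3$-crossings, because removing singleton blocks leaves the arcs of the remaining blocks untouched and an order-preserving relabeling preserves the chain $i_1<i_2<i_3\le j_1<j_2<j_3$ of inequalities characterizing an enhanced $3$-crossing, so such a crossing in $P'$ would lift to one in $P$. Second, $\Phi^{-1}$ is well defined: adjoining singletons cannot create an enhanced $3$-crossing, since a singleton contributes only a loop $(i,i)$ to the enhanced diagram, and a direct check with the defining inequalities shows a loop cannot occupy any of the three slots (in each case the chain forces $i_3\le j_1$ together with a strict inequality pointing the other way); as the non-loop arcs of $\Phi^{-1}(S,Q)$ coincide with those of $Q$, no new crossing appears.

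The main obstacle is the preliminary identification of A059710 with $\#\mathcal{T}_n$: once the $G_2$ sequence is known to enumerate singleton-free partitions avoiding enhanced $3$-crossings (equivalently, once A108307 is known to count the walks of A059710 with one added idle step), everything downstream is the short structural bijection $\Phi$, which simply records which elements of $[n]$ are the singletons. If one prefers to argue entirely on the walk side, the proof reads identically: A108307 counts closed walks of length $n$ in a fixed lattice region with step set $\mathcal{S}\cup\{\mathbf{0}\}$, A059710 counts those with step set $\mathcal{S}$, and $\Phi$ records the positions of the $\mathbf{0}$-steps and deletes them.
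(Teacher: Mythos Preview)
Your bijection $\Phi$ is correct, and the verification that a loop $(i,i)$ cannot occupy any slot of an enhanced $3$-crossing is fine. But, as you yourself flag, the entire content of the theorem is your ``preliminary identification'' $T_3(n)=\#\mathcal{T}_n$, and you do not actually prove it. The identity $E_3(n)=\sum_k\binom{n}{k}\,\#\mathcal{T}_k$ via $\Phi$ is just the tautology that deleting singletons is inverse to inserting them; it reduces Theorem~\ref{thm:bt} to the claim that $T_3(n)$ counts the singleton-free enhanced-$3$-noncrossing partitions of $[n]$, which is a priori exactly as hard as the original statement. So the proposal is a plan rather than a proof: the missing step is not a lemma supporting the argument, it \emph{is} the argument.

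The paper fills this gap on the walk side, and the details are more delicate than your final paragraph suggests. It is not the case that the $T_3$-walks use a step set $\mathcal{S}$ of nonzero steps while the $E_3$-walks use $\mathcal{S}\cup\{\mathbf{0}\}$. The $T_3$-walks already carry a zero step, forbidden along the boundary ray $x=0$ of the dominant chamber, and the $E_3$-walks (via height-$2$ hesitating tableaux) carry \emph{two} zero steps, one forbidden on the line $x=y$ and one unrestricted. The paper's work consists of (i) an explicit affine change of coordinates identifying the six nonzero steps and the two domains, and (ii) the observation that after this change the restricted zero step on the $T_3$ side and the restricted zero step on the $E_3$ side obey the \emph{same} boundary rule; only then does Lemma~\ref{LEM:binomialwalks} apply to the remaining unrestricted zero step and yield the binomial transform. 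If you want to complete your argument on the partition side, you must track this through the hesitating-tableau bijection: the unrestricted zero step (add then remove a cell on the first row) corresponds to the singletons your $\Phi$ strips off, but you still have to show that the residual model---six nonzero steps together with one boundary-restricted zero step---matches the $G_2$ crystal walk, and that match is precisely what the paper establishes.
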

Theorem~\ref{thm:bt} provides an unexpected connection between invariant
theory of~$G_2$ and combinatorics of set partitions. In the same
spirit,~\cite[Theorem 1]{Jordan2019} proves a binomial relation between~$E_3$
and~\oeis{A108304}, which is the third octant sequence. Together, these two
results show that the octant sequences are associated to representations of
$G_2$.

Building on Theorem~\ref{thm:bt}, as well as on results by Bousquet-M{\'e}lou
and Xin~\cite{Bousquet-Melou2005}, our second contribution is to give two
independent proofs of a recurrence equation for $T_3$ conjectured by
Mihailovs~\cite[\S3]{Westbury2005}, which was the initial motivation for our
study:
\begin{thm}\label{thm:rc}
The sequence $T_3$ is determined by the initial conditions
$ T_3\left( 0 \right)=1$, $T_3\left( 1 \right)=0$, $T_3\left( 2 \right)=1$ and the recurrence relation that for
$n\ge 0$,
\begin{multline}
14\left( n+1 \right)  \left( n+2 \right) T_3 \left( n \right) +
 \left( n+2 \right) \left( 19n+75 \right)T_3 \left( n+1 \right) \\
+2 \left( n+2 \right) \left( 2n+11 \right) T_3\left( n+2 \right)
  - \left( n+8
 \right)  \left( n+9 \right) T_3\left( n+3 \right)=0.
\end{multline}
\end{thm}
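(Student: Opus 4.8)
\emph{Proof strategy.}
The plan is to derive the recurrence from Theorem~\ref{thm:bt} together with the order-two recurrence for $E_3$ established by Bousquet-M\'elou and Xin~\cite{Bousquet-Melou2005}, namely
\[
  8(n+1)(n+3)\,E_3(n) + \bigl(7n^2+53n+88\bigr)\,E_3(n+1) - (n+7)(n+8)\,E_3(n+2) = 0 ,
\]
which together with $E_3(0)=E_3(1)=1$ determines $E_3$ uniquely, its leading coefficient $(n+7)(n+8)$ being nonzero for $n\ge 0$. The leading coefficient $(n+8)(n+9)$ of the recurrence claimed for $T_3$ is likewise nonzero for $n\ge 0$, so that recurrence together with $T_3(0)=1$, $T_3(1)=0$, $T_3(2)=1$ (the first three terms of \oeis{A059710}) determines a unique sequence $(t_n)_{n\ge 0}$, and it suffices to show $t_n=T_3(n)$ for all $n$. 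By Theorem~\ref{thm:bt} and the invertibility of the binomial transform, this is equivalent to showing that $e_n:=\sum_{k=0}^{n}\binom{n}{k}t_k$ equals $E_3(n)$ for all $n$.

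To prove the latter I would check the two ingredients that characterize $E_3$. First, the initial values agree: $e_0=t_0=1=E_3(0)$ and $e_1=t_0+t_1=1=E_3(1)$. Second, $(e_n)$ satisfies the Bousquet-M\'elou--Xin recurrence; this is a purely formal manipulation in the Ore algebra $\mathbb{Q}(n)\langle S\rangle$ with $Sn=(n+1)S$. Writing $B$ for the binomial-transform operator on sequences, one has the intertwining relation $S\,B=B\,(S+1)$ together with a companion rule describing $B$ composed with multiplication by $n$; conjugating the Bousquet-M\'elou--Xin operator $R$ by $B$ produces an operator $\tilde R\in\mathbb{Q}(n)\langle S\rangle$ with $R(Bu)=B(\tilde R u)$ for every sequence $u$. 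It then remains to verify, by Euclidean right division in $\mathbb{Q}(n)\langle S\rangle$ after clearing denominators, that the order-three operator $P$ appearing in the statement is a right factor of $\tilde R$. Granting this, $P(t)=0$ forces $\tilde R(t)=0$, hence $R(e)=R(Bt)=B(\tilde R t)=0$, as required. Equivalently, one may conjugate in the opposite direction and check that $\hat P:=BPB^{-1}$ is a \emph{left} multiple of $R$; then $\hat P(E_3)=0$, that is $B\bigl(P(T_3)\bigr)=0$, and hence $P(T_3)=0$ directly.

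A parallel proof works with generating functions. The ordinary generating function $E(x)=\sum_{n\ge 0}E_3(n)x^n$ is $D$-finite and satisfies the linear ODE encoded by $R$; under the binomial transform the two generating functions are related by $T(x)=\tfrac{1}{1+x}\,E\!\left(\tfrac{x}{1+x}\right)$, so substituting this relation into the ODE for $E$ yields an ODE for $T(x)=\sum_{n\ge 0}T_3(n)x^n$, from which extracting the coefficient of $x^n$ gives the recurrence for $T_3$. Both routes are algorithmic --- they rely only on the standard closure properties of $D$-finite functions and $P$-recursive sequences --- and can be carried out with a computer-algebra package.

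I expect the main obstacle to be a matter of care rather than of ideas: getting the conjugation rules for the binomial transform exactly right, particularly the twist attached to multiplication by $n$ (which introduces a shift in the opposite direction), and then confirming that the operator produced by the closure computation has order exactly three and agrees with $P$ up to a polynomial factor in $n$ --- that is, that $P$ itself, and not merely a left multiple of $P$ or a larger operator, annihilates $T_3$. Once this divisibility of Ore operators has been checked and the three initial values matched, Theorem~\ref{thm:rc} follows.
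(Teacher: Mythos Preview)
Your proposal is correct and mirrors the paper's own first two proofs of Theorem~\ref{thm:rc}: your generating-function route is exactly the paper's second proof (substitute $\mathcal T(t)=\frac{1}{1+t}\,\mathcal E\!\left(\frac{t}{1+t}\right)$ into the ODE of Proposition~\ref{main-thm-H} and read off the recurrence), while your Ore-algebra conjugation route is a close variant of the paper's first proof, which instead applies creative telescoping directly to the summand $(-1)^{n-k}\binom{n}{k}E_3(k)$. The paper also supplies a third, independent proof via the constant-term description of $T_3(n)$ coming from $G_2$ walks and a Picard--Fuchs (algebraic residue) computation, which does not pass through Theorem~\ref{thm:bt} at all.
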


Furthermore, we give an alternative proof of Theorem~\ref{thm:rc}, using the
interpretation of~$T_3$ in terms of $G_2$ walks and using algorithms for
computing Picard-Fuchs differential equations for algebraic residues. As a
consequence, closed formulae for the generating function of $T_3$ are obtained
in terms of the classical Gaussian hypergeometric function.

We consider a second family of sequences, called~\emph{quadrant sequences}.
These are defined to be sequences associated to representations of $G_2$
restricted to $SL(3)$. By invariant theory, these sequences are also are
related by binomial transforms. Based on this, we give a uniform recurrence
equation holding \emph{for each quadrant sequence}. Furthermore, we show that
sequences in the second family are identical to quadrant sequences because
they satisfy the same initial conditions and recurrence equations. They are
related to the octant sequences by the branching rules~\cite{MR638077} for the
maximal subgroup $SL(3)$ of $G_2$.

\section{Preliminaries}

In this section, we give a general construction of sequences associated to the
invariant theory of algebraic groups and the relations between them.

\begin{defn} \label{DEF:seq}
Let $G$ be a reductive complex algebraic group and let~$V$ be a (finite
dimensional) representation of~$G$. The {\emph sequence associated to $(G,
V)$}, denoted~$\ba_V$, is the sequence whose $n$-th term is the multiplicity
of the trivial representation in the tensor power $\otimes^nV$. 
\end{defn}

Given the sequence $\ba$ with $n$-th term $a(n)$, the \Dfn{binomial transform
of $\ba$} is the sequence, denoted $\bt \ba$, whose $n$-th term is
\[
\sum_{i=0}^n \binom{n}{i}a(i).
\]
Here, $\bt$ denotes the \emph{binomial transform operator}. Binomial
transforms arise naturally for sequences associated to the representations of
reductive complex algebraic groups.

\begin{lemma} \label{LEM:binomialrep} 
Assume that $\ba_V$ is the sequence associated to $(G, V)$ as specified in
Definition~\ref{DEF:seq}. Then $\ba_{V\oplus \bC} = \bt \ba_V$.
\end{lemma}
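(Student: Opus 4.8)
The plan is to reduce the statement to two elementary facts: the tensor product is additive (distributes over direct sums) in each argument, and the multiplicity of the trivial representation in a representation $W$ equals $\dim W^G$, a quantity additive over finite direct sums. No deep input is needed; the content is just the bookkeeping of binomial coefficients.

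First I would record that for any $G$-representation $W$ the invariants functor satisfies $(W_1\oplus W_2)^G = W_1^G\oplus W_2^G$, so $\dim W^G$ is additive over finite direct sums; and that $\dim W^G$ is exactly the multiplicity of the trivial representation $\bC$ in $W$. Since $G$ is reductive and the ground field is $\bC$, complete reducibility also lets one read off this multiplicity as the number of trivial summands in any decomposition of $W$ into irreducibles, so either description of $a_V(n)$ may be used interchangeably.

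Next I would expand the $n$-th tensor power. Because $\otimes$ is additive in each factor, choosing for each of the $n$ tensor slots whether to take the summand $V$ or the summand $\bC$ yields, as $G$-representations,
\[
\otimes^{n}(V\oplus\bC)\;\cong\;\bigoplus_{k=0}^{n}\bigl(\otimes^{k}V\otimes\otimes^{\,n-k}\bC\bigr)^{\oplus\binom{n}{k}},
\]
the coefficient $\binom{n}{k}$ counting the ways to designate which $k$ of the $n$ slots are occupied by $V$. Using $\otimes^{\,n-k}\bC\cong\bC$ and $W\otimes\bC\cong W$, this collapses to $\otimes^{n}(V\oplus\bC)\cong\bigoplus_{k=0}^{n}(\otimes^{k}V)^{\oplus\binom{n}{k}}$.

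Finally I would take $G$-invariants and dimensions of both sides and invoke the additivity from the first step:
\[
a_{V\oplus\bC}(n)=\dim\bigl(\otimes^{n}(V\oplus\bC)\bigr)^{G}=\sum_{k=0}^{n}\binom{n}{k}\dim\bigl(\otimes^{k}V\bigr)^{G}=\sum_{k=0}^{n}\binom{n}{k}a_V(k),
\]
which is precisely $(\bt\ba_V)(n)$, giving $\ba_{V\oplus\bC}=\bt\ba_V$. The only point that calls for a word of care — and the closest thing to an ``obstacle'' — is verifying the expansion of $\otimes^{n}(V\oplus\bC)$ with its binomial coefficients, i.e.\ that every tensor slot drawn from the summand $\bC$ contributes exactly one copy of the trivial representation; this is what forces the binomial transform, rather than some other transform, to appear.
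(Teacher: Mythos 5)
Your argument is correct and is exactly the standard one: the paper states this lemma without proof, and the intended justification is precisely the $G$-equivariant expansion $\otimes^{n}(V\oplus\bC)\cong\bigoplus_{k=0}^{n}(\otimes^{k}V)^{\oplus\binom{n}{k}}$ followed by additivity of the trivial-isotypic multiplicity (equivalently $\dim(\cdot)^G$) over direct sums. Nothing is missing; your remark that each $\bC$-slot contributes a trivial factor, which is what produces the binomial coefficients, is the whole content of the lemma.
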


The binomial transform also arises naturally for lattice walks restricted to a
domain.

\begin{lemma} \label{LEM:binomialwalks}
Assume that a sequence $\ba$ enumerates walks in a lattice, confined to a
domain $D$, using a set of steps~$S$. Then $\bt \ba$ is given by adding a new
step corresponding to the zero element of the lattice; that is, $S$ is
replaced by the disjoint union $S\coprod \{0\}$ without changing the domain.
\end{lemma}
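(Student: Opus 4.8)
The plan is to exhibit an explicit bijection that \emph{inserts stay-steps} into a walk. Fix once and for all a base point $p_0 \in D$ at which every walk begins (the origin of the lattice, say), so that $a(n)$ counts the sequences $(s_1,\dots,s_n) \in S^n$ with $p_0 + s_1 + \cdots + s_j \in D$ for all $0 \le j \le n$. Write $S' := S \coprod \{0\}$, where the new step $0$ is a \emph{labelled} copy of the identity element of the lattice, kept formally distinct from any step of $S$. I want to show that the number of length-$n$ walks confined to $D$ with step set $S'$ equals $\sum_{k=0}^n \binom{n}{k} a(k) = (\bt \ba)(n)$.

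For the forward map, take such a length-$n$ walk $(s_1,\dots,s_n) \in (S')^n$ and record the set $I \subseteq \{1,\dots,n\}$ of positions carrying the distinguished step $0$; this is unambiguous precisely because $0$ is a labelled copy, even if $0 \in S$ as an ordinary step. Deleting those positions leaves a sequence of $k := n - |I|$ steps from $S$. Since a $0$-step does not move the walker, the sequence of lattice points visited by the shortened walk is a sub-sequence of the points visited by the original one, hence still lies entirely in $D$; so the shortened walk is one of the $a(k)$ walks counted by $\ba$. For the inverse map, given a length-$k$ walk with steps in $S$ confined to $D$ together with a choice of $n-k$ insertion slots among $\{1,\dots,n\}$, reconstruct the unique length-$n$ walk over $S'$ obtained by placing $0$'s in the chosen slots and the original steps, in order, in the remaining ones. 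Confinement is again preserved, since inserting stationary steps only repeats already-admissible points. These two maps are mutually inverse.

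It follows that length-$n$ walks with step set $S'$ in $D$ are in bijection with pairs consisting of a size-$(n-k)$ subset of $\{1,\dots,n\}$ and a length-$k$ walk with step set $S$ in $D$, ranging over $0 \le k \le n$. Summing over $k$ gives $\sum_{k=0}^n \binom{n}{n-k} a(k) = \sum_{k=0}^n \binom{n}{k} a(k)$, which is exactly the $n$-th term of $\bt\ba$; combined with Lemma~\ref{LEM:binomialrep}-style bookkeeping this is all that is needed.

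The argument is essentially formal, and I do not expect a real obstacle; the only point deserving care is the bookkeeping that makes the decomposition a genuine bijection — namely treating the added step as a labelled copy of $0$ so that ``which steps are stay-steps'' is well defined regardless of whether $0 \in S$, and observing that the domain constraint, being a condition on the \emph{set} of visited lattice points, is insensitive to the repetitions produced by stay-steps. No structure on $D$ beyond its being a fixed subset of the lattice, and no structure on the lattice beyond possessing a distinguished zero element, is used.
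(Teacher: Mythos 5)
Your proof is correct; the paper states this lemma without proof, treating it as standard, and your insertion/deletion bijection (marking the positions of the labelled stay-steps and observing that confinement to $D$ is unaffected by inserting or deleting them) is exactly the argument the authors take as evident. The two points you flag as needing care --- keeping the new zero step formally distinct from any pre-existing zero step of $S$, and noting that the added step carries no boundary restriction --- are precisely the features the paper relies on when it later applies the lemma in the proof of Theorem~\ref{thm:bt}.
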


The binomial transform can also be regarded as an operator on the generating
function of a sequence. Let $G(t) = \sum_{n = 0}^\infty a(n) t^n$ be the
generating function of the sequence $\ba$. We denote the generating function
of $\bt^k \ba$ by $\bt^kG$. Then we have
\begin{lemma}\label{lem:btg}
For $k\in\bZ$, the $k$-th binomial transform of $G(t)$ is
\[ (\bt^kG)(t) = \frac 1{1-k\;t} G\left(\frac {t}{1-k\;t}\right). \]
\end{lemma}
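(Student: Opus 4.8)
The plan is to reduce everything to the case $k=1$ together with a composition (group) law. First I would verify the statement for $k=1$ by a direct computation with formal power series: starting from $(\bt\ba)(n)=\sum_{i=0}^n\binom ni a(i)$, I would write
\[
(\bt G)(t)=\sum_{n\ge0}t^n\sum_{i=0}^n\binom ni a(i)=\sum_{i\ge0}a(i)\sum_{n\ge i}\binom ni t^n
=\sum_{i\ge0}a(i)\,\frac{t^i}{(1-t)^{i+1}}=\frac1{1-t}\,G\!\left(\frac t{1-t}\right),
\]
where the interchange of summations is legitimate because, for each fixed power of $t$, only finitely many pairs $(n,i)$ contribute, and the inner sum is the standard identity $\sum_{n\ge i}\binom ni t^n=t^i/(1-t)^{i+1}$. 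This settles $k=1$; the case $k=0$ is the trivial identity $G=G$.

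Next I would introduce the operator $S_k$ defined by $(S_kG)(t)=\tfrac1{1-kt}\,G\bigl(\tfrac t{1-kt}\bigr)$ on formal power series — note that $t/(1-kt)=t+kt^2+\cdots$ has zero constant term, so the substitution is a well-defined operation on formal power series — and prove the group law $S_k\circ S_\ell=S_{k+\ell}$ for all $k,\ell\in\bZ$. This is a short algebraic check: substituting $u=t/(1-kt)$ into $S_\ell$ and using $1-\ell u=\tfrac{1-(k+\ell)t}{1-kt}$ collapses the two prefactors $\tfrac1{1-kt}\cdot\tfrac1{1-\ell u}$ to $\tfrac1{1-(k+\ell)t}$ and the nested argument $\tfrac u{1-\ell u}$ to $\tfrac t{1-(k+\ell)t}$. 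In particular $S_k\circ S_{-k}=S_0=\mathrm{id}$, so each $S_k$ is invertible with inverse $S_{-k}$.

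Finally, since $\bt$ acts on generating functions as $S_1$ by the first step, and $\bt$ is invertible as an operator on sequences — its inverse being the alternating binomial transform $\ba\mapsto\bigl(n\mapsto\sum_i\binom ni(-1)^{n-i}a(i)\bigr)$, whose generating-function incarnation is $S_{-1}$ — the $k$-th iterate $\bt^k$ acts on generating functions as $S_1^{\,k}$ for every $k\in\bZ$; by the group law $S_1^{\,k}=S_k$, which is precisely the claimed formula.

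The only genuine subtlety — hence the point I would be most careful about — is the passage to \emph{negative} $k$: one must know that $\bt$ is genuinely invertible (so that $\bt^k$ for $k<0$ is meaningful) and identify its inverse with $S_{-1}$ on the generating-function side. Once that is in hand, the group law does all the remaining work. Alternatively, one can bypass iteration entirely by first establishing the closed form $(\bt^k\ba)(n)=\sum_{i=0}^n\binom ni k^{\,n-i}a(i)$ — most cleanly via exponential generating functions, where $\bt$ becomes multiplication by $e^t$ and $\bt^k$ multiplication by $e^{kt}$, which is manifestly valid for all $k\in\bZ$ — and then repeating the summation of the first step with the extra factor $k^{\,n-i}$ carried along.
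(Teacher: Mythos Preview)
Your argument is correct. The paper itself states this lemma without proof, treating it as a standard fact about the binomial transform, so there is no ``paper's proof'' to compare against.

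Your two-step strategy --- verify the case $k=1$ by direct summation, then use the Möbius group law $S_k\circ S_\ell=S_{k+\ell}$ to extend to all $k\in\bZ$ --- is clean and complete. The computation for $k=1$ is right, the group-law verification is right, and you correctly flag that the only real content for negative $k$ is the invertibility of $\bt$ together with the identification of its inverse with $S_{-1}$. The alternative you sketch at the end (prove $(\bt^k\ba)(n)=\sum_i\binom ni k^{n-i}a(i)$ first, for example via the exponential-generating-function picture where $\bt$ is multiplication by $e^t$, and then redo the single-summation step with the extra factor $k^{n-i}$) is in fact slightly more economical, since it avoids the iteration and the separate treatment of negative $k$ altogether; either route is perfectly acceptable for a result the paper evidently regards as folklore.
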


\section{Octant sequences}

The first family of sequences we are interested in is illustrated in
Figure~\ref{fig:seq}. In Subsection~\ref{SUBSEC:latticewalks}, we show
that~\oeis{A059710} and~\oeis{A108307} are related by the binomial transform,
which is Theorem~\ref{thm:bt}. Combining with~\cite[Theorem 1]{Jordan2019} and
Lemma~\ref{LEM:binomialrep}, sequences in this first family are identical to
the sequences $\ba_{V\oplus k\,\bC}$ (Definition~\ref{DEF:seq}), where $V$ is
the seven dimensional fundamental representation of the exceptional simple Lie
algebra $G_2$, for $k = 0, 1, 2$. They can also be interpreted in terms of
lattice walks restricted to the octant. Thus, we call them \emph{octant
sequences}.

Let $T_3(n)$ be the $n$-th term of the first octant sequence~\oeis{A059710}.
In Subsection~\ref{SUBSEC:algebra}, based on Theorem~\ref{thm:bt}, we present
two independent proofs of~Theorem~\ref{thm:rc}, which gives a linear
recurrence equation for~$T_3(n)$. Besides, we also give a direct proof
of~Theorem~\ref{thm:rc} by the method of algebraic residues. As a consequence,
closed formulae for the generating function of~$T_3$ are presented in terms of
hypergeometric functions in Subsection~\ref{SUBSEC:closedformulae}.

\begin{figure}
\begin{center}
\begin{tabular}{c|rrrrrrrrrr}
$\ba$ (\href{https://oeis.org}{OEIS} tag) & 0 & 1 & 2 & 3 & 4 & 5 & 6 & 7 & 8 & 9 \\ \hline
\oeis{A059710} & 1 & 0 & 1 & 1 & 4 & 10 & 35 & 120 & 455 & 1792 \\
\oeis{A108307} & 1 & 1 & 2 & 5 & 15 & 51 & 191 & 772 & 3320 & 15032 \\
\oeis{A108304} & 1 & 2 & 5 & 15 & 52 & 202 & 859 & 3930 & 19095 & 97566
\end{tabular}
\caption{The first family of sequences $\ba$, with their \href{https://oeis.org}{OEIS} tags, and their first terms.}
 \label{fig:seq}
\end{center}
\end{figure}

\subsection{Lattice walks} \label{SUBSEC:latticewalks}
In this subsection we prove Theorem~\ref{thm:bt}, using lattice walks. An
\Dfn{excursion} is a walk which starts and ends at the origin. The main idea
of our proof is to exhibit excursions that are enumerated by the two sequences
and then to compare these.

Sequence $T_3$ has an interpretation via enumerating excursions on the weight
lattice of~$G_2$. This interpretation is given in
\cite{Kuperberg1997,Westbury2005} and it uses the theory of Kashiwara
crystals.

\begin{prop} 
Let $C$ be the crystal of the representation $V$. For $n\geqslant 0$,
$T_3(n)$ is the number of highest weight words of weight 0 in the tensor power
$\otimes^nC$.
\end{prop}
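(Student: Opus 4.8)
The plan is to unwind the standard dictionary between invariant theory and Kashiwara crystal theory. First I would recall that, by definition, $T_3(n)$ is the multiplicity of the trivial representation $\bC$ inside $\otimes^n V$, where $V$ is the seven-dimensional fundamental representation of $G_2$. By complete reducibility, $\otimes^n V$ decomposes as a direct sum of irreducibles $V_\lambda$ with multiplicities $m_\lambda$, and the multiplicity of the trivial representation $V_0$ is exactly $m_0$. So the task reduces to showing that $m_0$ equals the number of highest weight words of weight $0$ in $\otimes^n C$, where $C$ is the crystal of $V$.

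The key input is the theory of Kashiwara crystals: passing to crystals is compatible with tensor products, so the crystal of $\otimes^n V$ is $\otimes^n C$ (with Kashiwara's tensor product rule for crystals). Moreover, a finite-dimensional $G_2$-module $M$ decomposes as $\bigoplus_\lambda V_\lambda^{\oplus m_\lambda}$ if and only if its crystal $B(M)$ decomposes as a disjoint union of connected components, with exactly $m_\lambda$ components isomorphic to the highest-weight crystal $B(\lambda)$. The next step is the standard fact that connected components of a crystal are in bijection with their highest weight elements, i.e.\ with the elements $b$ killed by all the raising operators $\tilde e_i$; such an element is called a highest weight word, and its weight is the highest weight $\lambda$ of the component it generates. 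Combining these, the number of components of $\otimes^n C$ isomorphic to $B(0)$ equals the number of highest weight words in $\otimes^n C$ of weight $0$, which is precisely $m_0 = T_3(n)$. For $n = 0$ the empty tensor power is the trivial crystal and both sides equal $1$, giving the base case.

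The main obstacle is not any single hard computation but rather assembling the correct categorical statements and citing them cleanly: one must invoke (i) existence and good behaviour of crystal bases for integrable highest-weight modules over $G_2$ and their stability under $\otimes$, and (ii) the dictionary between module decomposition and decomposition of the crystal into connected components. Both are standard — they appear in Kashiwara's work and in the references \cite{Kuperberg1997,Westbury2005} — so the proof is essentially a matter of correctly quoting them and noting that ``highest weight word of weight $0$'' is just another name for a highest weight element generating a copy of the trivial crystal. I would therefore keep the argument short, emphasizing the two-step reduction (invariants $\leftrightarrow$ trivial summands $\leftrightarrow$ weight-$0$ highest weight words) and pointing to the literature for the crystal-theoretic facts.
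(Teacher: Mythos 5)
Your argument is correct and is precisely the standard crystal-theoretic dictionary (tensor compatibility of crystals, components $\leftrightarrow$ irreducible summands, highest weight elements of weight $0$ $\leftrightarrow$ trivial components) that the paper itself invokes: the paper gives no proof of this Proposition, merely citing \cite{Kuperberg1997,Westbury2005} and noting that ``it uses the theory of Kashiwara crystals.'' Your write-up simply makes that cited argument explicit, so it takes essentially the same approach.
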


The highest weight words are in bijection with a set of excursions in the
weight lattice of $G_2$. The steps are the seven weights of the fundamental
representation $V$. The walk is constrained to stay in the dominant chamber.
These weights and the dominant chamber are shown in Figure~\ref{fig:stepswt}.
There is one further rule, which is the \emph{boundary rule}, stating that if
a walk is at a boundary point $(0,y)$, then the step $(0,0)$ is not permitted.

Let $E_3(n)$ be the $n$-th term of the second octant sequence~\oeis{A108307}.
Then the sequence $E_3$ enumerates hesitating tableaux of height 2, introduced
in \cite{MR2272140}. The excursion interpretation is given in
\cite{Bousquet-Melou2005}.

A \Dfn{hesitating tableau} of semilength $n$ is an excursion in the Young
lattice with $n$ steps. Each step is one of the following pairs of moves on
the Young lattice:
\begin{itemize}
    \item do nothing, add a cell;
    \item remove a cell, do nothing;
    \item add a cell, remove a cell.
\end{itemize}

A hesitating tableau of \Dfn{height~$h$} is a hesitating tableau such that
every partition in the sequence has height at most $h$. A hesitating tableau
of height 2 can be interpreted as an excursion in $\bZ^2$ by identifying
partitions with at most two nonzero rows with the set
\[ \{(x,y)\in\bZ^2 \; | \; x\geqslant y\geqslant 0. \} \]
There are eight steps since there are two ways to do nothing then add a cell,
two ways to remove a cell then do nothing and four ways to remove a cell then
add a cell. Two of the four ways to remove a cell then add a cell give the
step $(0,0)$, namely add a cell on the first row then remove this cell and add
a cell on the second row then remove this cell. It is always allowed to add
and then remove a cell on the first row. The step which adds and removes a
cell on the second row is not permitted on the line $x=y$.

\begin{proof}[Proof of Theorem~\ref{thm:bt}]

We compare the two descriptions of the walks. The steps of the walks for the
sequence $T_3$ are shown in Figure~\ref{fig:stepswt} and the steps of the
walks for the sequence $E_3$ are shown in Figure~\ref{fig:stepswk}.

In order to compare these, we first make the change of coordinates $(x,y) \to
(x,y\pm x)$. This identifies the six non-zero steps, as well as the two
domains.

By Lemma~\ref{LEM:binomialwalks}, it remains to compare the zero steps. There
is one zero step in the~$T_3$ case and two in the $E_3$ case. In the $E_3$
case we have the zero step which adds and then removes a cell on the second
row. The boundary condition is that this step is not allowed on the line
$x=y$. After the change of coordinates, this is the same boundary condition as
the zero step in the $T_3$ case. The second zero step in the $E_3$ case adds
and then removes a cell in the first row. This is always allowed. 

\end{proof}

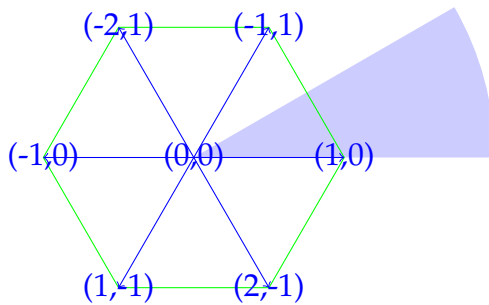
\begin{figure}
    \centering
\begin{tikzpicture}[scale=2]
    \fill[blue!20] (0,0) -- (2,0) arc(0:30:2) -- cycle;
    \draw[green] (1,0) -- (60:1) -- (120:1) -- (-1,0) -- (240:1) -- (300:1) -- cycle;
    \draw[->,blue] (0,0) -- (1,0) node {(1,0)};
    \draw[->,blue] (0,0) -- (60:1) node {(-1,1)};
    \draw[->,blue] (0,0) -- (120:1) node {(-2,1)};
    \draw[->,blue] (0,0) -- (-1,0) node {(-1,0)};
    \draw[->,blue] (0,0) -- (240:1) node {(1,-1)};
    \draw[->,blue] (0,0) -- (300:1) node {(2,-1)};
    \draw[blue] (0,0) node {(0,0)};
\end{tikzpicture}
    \caption{Steps in weight lattice}
    \label{fig:stepswt}
\end{figure}

\begin{figure}
    \centering
\begin{tikzpicture}[scale=1.7]
    \fill[blue!20] (0,0) -- (2,0) arc(0:45:2) -- cycle;
    \draw[green] (1,1) -- (1,-1) -- (-1,-1) -- (-1,1) -- cycle;
    \draw[->,blue] (0,0) -- (1,0) node {(1,0)};
    \draw[->,blue] (0,0) -- (0,1) node {(0,1)};
    \draw[->,blue] (0,0) -- (-1,1) node {(-1,1)};
    \draw[->,blue] (0,0) -- (-1,0) node {(-1,0)};
    \draw[->,blue] (0,0) -- (0,-1) node {(0,-1)};
    \draw[->,blue] (0,0) -- (1,-1) node {(1,-1)};
    \draw[blue] (0,0) node {(0,0)};
\end{tikzpicture}
    \caption{Steps in octant}\label{fig:stepswk}
\end{figure}
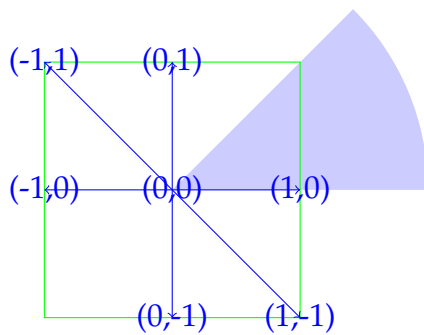

\subsection{Algebra} \label{SUBSEC:algebra}

In this subsection, we give three proofs of Theorem~\ref{thm:rc}, with
different flavors. The first two proofs use Theorem~\ref{thm:bt} and a result
of Bousquet-M\'elou and Xin~\cite{Bousquet-Melou2005} on partitions that avoid
3-crossings. The last proof relies on the connection with $G_2$ walks.

\subsubsection{First proof of Theorem~\ref{thm:rc}} 
The first proof is based on Theorem~\ref{thm:bt}, on~Proposition~2
in~\cite{Bousquet-Melou2005} and on the method of creative
telescoping~\cite{Zeilberger1991, KoutschanThesis} for the summation of
(bivariate) holonomic sequences.

\begin{prop}\label{main-thm-H}\cite[Proposition 2]{Bousquet-Melou2005}
The number $E_3(n)$ of partitions of $[n]$ having no enhanced $3$-crossing is
given by $ E_3\left( 0 \right)=E_3\left( 1 \right) =1,$ and for $n\ge 0$,
\begin{multline}
8\left( n+3 \right)  \left( n+1 \right) E_3 \left( n \right) +
 \left( 7{n}^{2}+53n+88 \right) E_3 \left( n+1 \right) \\
 - \left( n+8
 \right)  \left( n+7 \right) E_3\left( n+2 \right)=0.
\end{multline}
Equivalently, the associated generating function $\mathcal E(t)=\sum_{n\ge 0}
E_3(n)t^n$ satisfies
\begin{multline*}\label{e-dfinite-H}
{t}^{2} \left(1+ t \right)  \left(1- 8t \right) {\frac
{d^{2}}{d{t} ^{2}}}\mathcal E ( t )
+2t  \left( 6-23t-20{t}^{2} \right) {\frac {d}{dt}}\mathcal E ( t ) \\
+6 \left(5-7t -4{t}^{2}\right) \mathcal E ( t ) -30=0.
\end{multline*}
\end{prop}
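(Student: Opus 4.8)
The plan is to route the enumeration through lattice walks, extract the excursion generating function by an obstinate-kernel argument, and then produce the differential equation by creative telescoping. First I would invoke the correspondence of~\cite{MR2272140} recalled above: $E_3(n)$ is the number of hesitating tableaux of height at most~$2$ and semilength~$n$, which, under the dictionary used in the proof of Theorem~\ref{thm:bt}, is the number of excursions of length~$n$ in the octant $\{(x,y)\in\bZ^2 : x\geqslant y\geqslant 0\}$ for the eight-step model of Figure~\ref{fig:stepswk} --- six genuine steps together with two copies of the null step, one of which is forbidden on the wall $x=y$, all other boundary constraints being dictated simply by staying in the octant. Writing $S(x,y)$ for the Laurent polynomial of the six non-null steps, I would set up in the usual way the functional equation satisfied by the complete generating function $F(x,y;t)$ that records walks in the octant by length (variable~$t$) and by endpoint (variables~$x,y$), being careful that the forbidden diagonal step forces an extra unknown one-variable section supported on $x=y$.

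The core step is to solve this functional equation by the \emph{obstinate kernel method}: the kernel is invariant under a finite group of birational transformations of $(x,y)$, and taking the alternating sum of the functional equation over the orbit of this group annihilates all of the unknown one-variable sections --- including the diagonal boundary section --- leaving $\mathcal E(t)=F(0,0;t)$ expressed as the constant term in $x$ and $y$ of an explicit rational function of $x$, $y$ and~$t$. Since a constant term of a rational function is D-finite, $\mathcal E(t)$ then satisfies some linear ODE with polynomial coefficients. (As a variant, one could instead apply the Gessel--Zeilberger reflection principle directly to the excursions, writing $\mathcal E(t)$ as a signed sum over the same group of generating functions of unconstrained walks.)

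To pin down the specific second-order equation in the statement I would feed the rational integrand to a creative-telescoping procedure~\cite{Zeilberger1991,KoutschanThesis}, eliminating $x$ and then $y$, certify the resulting telescoper, and read off the equation for $\mathcal E(t)$; extracting the coefficient of $t^n$ then yields the $P$-recurrence for $E_3(n)$. The two formulations in the statement are equivalent by a routine, purely formal translation between an order-$2$ recurrence with quadratic polynomial coefficients and a second-order ODE; the inhomogeneous constant~$-30$ records the only low-order correction, since evaluating the ODE at $t=0$ forces $6\cdot 5\cdot E_3(0)=30$. Checking $E_3(0)=E_3(1)=1$ against the definition of enhanced-crossing-free partitions then completes the proof.

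The step I expect to be the main obstacle is the bookkeeping forced by the boundary rule. Because one copy of the null step is forbidden on $x=y$, the model is not translation-invariant along that wall, so neither the kernel method nor the reflection principle applies verbatim; one must carry the extra diagonal section through the orbit sum and check that it, too, is killed (equivalently, that the forbidden boundary step is compatible with the reflecting hyperplanes of the group). Getting that inhomogeneous kernel analysis exactly right, and afterwards certifying the telescoper for the resulting bivariate rational function, is where the real work lies.
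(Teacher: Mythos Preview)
The paper does not prove this proposition at all: it is quoted from \cite[Proposition~2]{Bousquet-Melou2005} and used as a black box in the first two proofs of Theorem~\ref{thm:rc}. So there is nothing in the paper to compare your argument against; your sketch is, in spirit, a reconstruction of the method of the cited reference (functional equation, obstinate kernel method, extraction of the excursion series as a constant term, then D-finiteness and the explicit ODE).

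One remark on your chosen route. The obstacle you flag --- the forbidden null step on the wall $x=y$ and the extra diagonal section it forces into the kernel equation --- is self-inflicted. You are passing through the eight-step octant model of Figure~\ref{fig:stepswk}, which is the model tailored to the comparison with $T_3$ in Theorem~\ref{thm:bt}; but Bousquet-M\'elou and Xin work directly from the hesitating-tableau description, encoding each tableau step as a \emph{pair} of add/remove/do-nothing moves on partitions with at most two parts. In that encoding the constraint is simply that every intermediate shape is a partition, i.e.\ stays in the region $x\geqslant y\geqslant 0$, with no additional wall rule; the functional equation then has only the standard boundary sections, and the orbit sum of the kernel group kills them cleanly. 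If you insist on the single-step model with the diagonal rule you can still push it through, but it is a detour; going back to the pair-of-moves encoding removes the difficulty you anticipate.
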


\begin{proof}[First Proof of Theorem~\ref{thm:rc}]
By Theorem~\ref{thm:bt}, we have 
\[ T_3(n) = \sum_{k = 0}^n (-1)^{n - k} \binom nk E_3(k). \]
Set $f(n, k) = (-1)^{n - k} \binom nk E_3(k)$. By
Proposition~\ref{main-thm-H}, and by the closure properties for holonomic
functions, it follows that $f(n, k)$ is holonomic. Thus, we can apply Chyzak's
algorithm~\cite{KoutschanThesis} for creative telescoping to derive a
recurrence relation for~$T_3$. In particular, using the Koutschan's
Mathematica package {\tt HolonomicFunctions.m}~\cite{Christoph2010} that
implements Chyzak's algorithm, we find exactly the recurrence equation in
Theorem~\ref{thm:rc}.

\end{proof}
\noindent The detailed calculation involved in the above proof can be found
in~\cite{ElectronicYZ}.

\subsubsection{Second proof of Theorem~\ref{thm:rc}}

The second proof is also based on~Proposition~2 in~\cite{Bousquet-Melou2005}
and on Theorem~\ref{thm:bt}, namely on the relation between the generating
functions of $T_3(n)$ and of $E_3(n)$ implied by Theorem~\ref{thm:bt}.

\begin{proof}[Second Proof of Theorem~\ref{thm:rc}] Let $\mathcal T(t) = \sum_{n \geq 0} T_3(n) t^n$. 
By Theorem~\ref{thm:bt} and Lemma~\ref{lem:btg},
\[ \mathcal T(t) = \frac{1}{1 + t} \cdot \mathcal E\left( \frac{t}{1 + t} 
\right). \]
We know from Proposition~\ref{main-thm-H} a differential equation for
$\mathcal E\left( t \right)$. By (univariate) closure properties of D-finite
functions, we deduce a differential equation for $\mathcal T(t)$, and convert
it into a linear recurrence for $T_3(n)$, which is exactly the recurrence in
Theorem~\ref{thm:rc}. \end{proof}

\subsubsection{Third proof of Theorem~\ref{thm:rc}}

The third proof of Theorem~\ref{thm:rc} relies on~$G_2$ walks and the method
in~\cite{MR3588720}. First we define two elements $K$ and $W$ in the group
ring of the root lattice of $G_2$. Since the root lattice has rank 2, these
become Laurent polynomials in two variables once we choose a basis of the root
lattice. The element $K$ is the character of the representation $V$; $W$ is
given by
\[ W = \sum_{w\in W} \varepsilon(w)\left[ w(\rho)-\rho \right] \]
Here $W$ is the Weyl group, $\varepsilon\colon W\to \{\pm 1\}$ is the sign
character and $\rho$ is half the sum of the positive roots.

The following definition is given in \cite{MR1265145} in the paragraph
following Conjecture~3.3. The general principle is discussed in
\cite{MR1092920}.

\begin{defn}\label{def:W}
The $n$-th term $T_3(n)$ is the constant term of the Laurent polynomial
$W\,K^n$, where
\[ K = (1 + x + y + x\,y + x^{-1} + y^{-1} + (xy)^{-1}), \]
and $W$ is the Laurent polynomial
\begin{multline*}
W = x^{-2}y^{-3}(x^2y^3 - xy^3 + x^{-1}y^2 - x^{-2}y +x^{-3}y^{-1} - x^{-3}y^{-2}\\ + x^{-2}y^{-3} - x^{-1}y^{-3} + xy^{-2} - x^2y^{-1} + x^3y - x^3y^2).
\end{multline*}
\end{defn}

\begin{proof}[Third Proof of Theorem~\ref{thm:rc}] 
Let $\mathcal T(t) = \sum_{n \geq 0} T_3(n) t^n$. By Definition~\ref{def:W},
$\mathcal T(t)$ is the constant coefficient $[x^0 y^0]$ of $W/(1-tK)$. In
other words, $\mathcal T(t)$ is equal to the algebraic residue of
$W/(xy-txyK)$, and thus it is D-finite~\cite{MR929767} and is annihilated by a
linear differential operator that cancels the contour integral of
$W/(xy-txyK)$ over a cycle. Using the integration algorithm for multivariate
rational functions in~\cite{Dwork} we compute the following operator of
order~6, that cancels $\mathcal T(t)$:
\begin{align*}
L_6 = {t}^{5} \left( t+1 \right)  \left( 7\,t -1\right)  \left( 2\,t+1 \right) ^{2}\partial^{6}+ \\
3\,{t}^{4} \left( 2\,t+1 \right) 
 \left( 168\,{t}^{3}+211\,{t}^{2}+40\,t-11 \right) \partial^{5}+ \\
6\,{t}^{3} \left( 2100\,{t}^{4}+3475\,{t}^{3}+1616\,{t}^{2}+
79\,t-61 \right) \partial^{4}+ \\
6\,{t}^{2} \left( 11200\,{t}^{4}+17400\,{t}^{3}+7556\,{t}^{2}+268\,t-273 \right) \partial^{3}+ \\
36\,t \left( 4200\,{t}^{4}+6100\,{t}^{3}+2442\,{t}^{2}+54\,t-77 \right) \partial^{2} +\\
36\, \left( 3360\,{t}^{4}+4540\,{t}^{3}
+1646\,{t}^{2}+16\,t-35 \right) \partial+\\
20160\,{t}^{3}+25200\,{t}^{2}+8064\,t,
\end{align*}
where $\partial = \frac{\partial}{\partial t}$ denotes the derivation operator
with respect to~$t$.

The operator $L_6$ factors as $L_6 = Q L_3$, where
\[
Q
=
 \left( 2\,t+1 \right) {t}^{3}{\partial}^{3}+ \left( 24\,t+13 \right) {t}^{2}{\partial}^{2}+6\, \left( 12\,t+7 \right) t{\partial}+48\,t+30,
\]
and
\begin{align*}
L_3 =
{t}^{2} \left( 2\,t+1 \right)  \left( 7\,t -1\right)  \left( t+1 \right) {\partial}^{3}+2\,t \left( t+1 \right)  \left( 63\,{t
}^{2}+22\,t-7 \right) {\partial}^{2}+ \\ \left( 252\,{t}^{3}+338\,{t}^{2}+36\,t-42 \right) {\partial}+28\,t \left( 3\,t+4 \right).
\end{align*}
This shows that $f(t) := L_3(\mathcal T(t))$ is a solution of the differential
operator $Q$. Hence, by denoting $f(t) = \sum_{n \geq 0} f_n t^n$, one deduces
that for all $n\geq 0$ we have \[2 \left( n+2 \right) f_n + \left( n+6 \right)
f_{n+1} = 0. \] One the other hand, from $\mathcal T(t) = 1+t^2 + O(t^3)$, it
follows that~$f_0=0$, therefore $f(t) = 0$, in other words $\mathcal T(t)$ is
also a solution of~$L_3$. From there, deducing the recurrence relation of
Theorem~\ref{thm:rc} is immediate. 
\end{proof}

\subsection{Closed formulae} \label{SUBSEC:closedformulae}

By factorization of the operator $L_3$ and using algorithms for solving second order differential equations, as described
in~\cite{MR3588720}, we derive the following closed formula for $\mathcal T(t)$:
\begin{equation*}
\mathcal T(t) = 
\setlength\arraycolsep{1pt}
\frac {1}{30 \, t^5} \left[
R_1\cdot {}_2F_1\left(\begin{matrix}\frac{1}{3}& &\frac{2}{3}\\&2&
\end{matrix};\phi\right) +
R_2 \cdot {}_2 F_1\left(\begin{matrix}\frac{2}{3}& &\frac{4}{3}\\&3&
\end{matrix};\phi\right) + 5 \, P
\right],
\end{equation*} 
where 
\[
R_1 = {\frac { \left( t+1 \right) ^{2} \left( 214\,{t}^{3}+45\,{t}^{2}+60\,t+5 \right) }{t-1}}, \quad \quad R_2 = 
6\,{\frac { {t}^{2} \left( t+1 \right) ^{2} \left( 101\,{t}^{2}+74\,t+5 \right) }{ \left( t-1 \right) ^{2}}},
\]
and 
\[
\phi = \frac{27\,(t+1)\,t^2}{(1-t)^3}, \quad \quad \quad \quad  P=
{28\,{t}^{4}+66\,{t}^{3}+46\,{t}^{2}+15\,t+1}.
\]

Following the approach in~\cite[\S3.3]{MR3588720}, itself based
on~\cite[Theorem 1.5]{StienstraBeukers85}, one can obtain an alternative
expression using a more geometric flavor. The key point is that the
denominator of $W/(xy-txyK)$ is a family of \emph{elliptic} curves, thus
integrating $W/(xy-txyK)$ over a small torus amounts to computing the periods
of the two forms (of the first and second kind). Working out the details, this
approach yields an expression for $\mathcal T(t)$ in terms of the Weierstrass
invariant
\[ g_2 = \left( t-1 \right)  \left( 25\,{t}^{3} + 21\,{t}^{2} + 3\,t - 1 \right) \]
and of the $j$-invariant
\[
J = 
{\frac { \left( t-1 \right)^3  \left( 25\,{t}^{3} + 21\,{t}^{2} + 3\,t - 1 \right) ^{3}}{{t}^{6} \left( 1-7\,t \right)  \left( 2
\,t+1 \right) ^{2} \left( t+1 \right) ^{3}}}
\]
of our family of curves. As in~\cite{MR3588720}, we introduce the expression 
\[
\setlength\arraycolsep{1pt}
H(t)  = 
\frac {1}{{g_2}^{1/4}} \cdot 
{}_2 F_1\left(\begin{matrix}\frac{1}{12}& &\frac{5}{12}\\& 1 &
\end{matrix};\frac{1728}{J}\right).
\]
Then $\mathcal T(t)$ is proved to be equal to
\begin{align*}
	\frac{P}{6 \, t^5} + 
\frac{	\left( 7\,t - 1\right) 
	 \left( 2\,t+1 \right)  \left( t+1 \right)}{360 \, t^5} 
	   \Big(  \left( 155\,{t}^{2}+182\,t+59 \right) \left( 11\,t+1 \right)  {\it H} \left( t \right)+ \\
 \left( 341\,{t}^{3}+507\,{t}^
	{2}+231\,t+1 \right) \left( 5\,t+1 \right)  H'(t)
\Big).
\end{align*}

\section{Quadrant sequences}

The second family of sequences we are interested in is given in
Figure~\ref{fig:bx}. We show that sequences in this second family are
identical to sequences associated to representations of $G_2$ restricted to
$SL(3)$. Thus, by Lemma~\ref{LEM:binomialrep}, those sequences are also
related by binomial transforms. Moreover, they can also be interpreted as
lattice walks restricted to the quadrant. This is why we call them
\emph{quadrant sequences}. They are related to the octant sequences by the
branching rules~\cite{MR638077} for the maximal subgroup $SL(3)$ of $G_2$.

\begin{figure}
    \centering
\begin{tabular}{c|rrrrrr}
$\ba$ (\href{https://oeis.org}{OEIS} tag) 
& 0 & 1 & 2 & 3 & 4 & 5\\ \hline
\oeis{A151366} & 1 & 0 & 2 & 2 & 12 & 30 \\ 
\oeis{A236408} & 1 & 1 & 3 & 9 & 33 & 131 \\ 
\oeis{A001181} & 1 & 2 & 6 & 22 & 92 & 422 \\
\oeis{A216947} & 1 & 3 & 11 & 49 & 221 & 1113
\end{tabular}
\caption{The second family of sequences $\ba$,
their \href{https://oeis.org}{OEIS} tags
and their first terms.}    
\label{fig:bx}
\end{figure}

\begin{defn}\label{defn:quad}
Let $V$ be the defining representation of $SL(3)$ and denote the dual by
$V^*$. For $k\geqslant 0$, the quadrant sequences $\mathcal{S}_k$ are the
sequence associated to $(SL(3), V\oplus V^*\oplus k\;\bC)$.
\end{defn}
 
\begin{lemma}\label{lemma:quad2}
Let $C_k$ be the generating function of $\mathcal{S}_k$, where $k \geq 0$.
Then $C_k$ is the constant coefficient of $[x^0 y^0]$ of $W/(1-tK)$, where
\begin{equation} \label{EQ:K}
K = k+x+y+{x}^{-1}+{y}^{-1}+{\frac {x}{y}}+{\frac {y}{x}}
\end{equation}
and
\begin{equation}\label{EQ:W}
W = 1-{\frac {{x}^{2}}{y}}+{x}^{3}-{x}^{2}{y}^{2}+{y}^{3}-{\frac {{y}^{2}}{x}}.
\end{equation}
\end{lemma}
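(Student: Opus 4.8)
The plan is to reproduce, for the rank-two group $SL(3)$, the constant-term description of the octant generating functions recorded in Definition~\ref{def:W}; the two ingredients are the Weyl character formula in its multiplicity form and the summation of a geometric series in $t$.

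First I would fix coordinates on the maximal torus of $SL(3)$ by setting $x = e^{\omega_1}$ and $y = e^{\omega_2}$, where $\omega_1,\omega_2$ are the fundamental weights, so that $\rho = \omega_1+\omega_2$ and $e^{\alpha_1}=x^2/y$, $e^{\alpha_2}=y^2/x$, $e^{\alpha_1+\alpha_2}=xy$. In these coordinates the defining representation $V$ has character $x+y/x+1/y$ and its dual $V^*$ has character $1/x+x/y+y$, so the character of $V\oplus V^*\oplus k\,\bC$ is exactly the Laurent polynomial $K$ of~\eqref{EQ:K}; in particular $K$ is invariant under $(x,y)\mapsto(x^{-1},y^{-1})$, which reflects the self-duality of $V\oplus V^*$.

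Next I would invoke the multiplicity form of the Weyl character formula: for any finite-dimensional $SL(3)$-module $M$ with character $\chi$, the multiplicity of the trivial constituent in $M$ equals the coefficient of $e^{\rho}$ in $\chi\,\delta$, where $\delta=\sum_{w}\varepsilon(w)e^{w\rho}=\prod_{\alpha>0}(e^{\alpha/2}-e^{-\alpha/2})$ is the Weyl denominator. Shifting by $e^{-\rho}$, this multiplicity is the constant term
\[ [x^0y^0]\!\left( \chi \cdot \prod_{\alpha>0}\bigl(1-e^{-\alpha}\bigr) \right). \]
A direct expansion of $\prod_{\alpha>0}(1-e^{-\alpha})$ over the three positive roots of $A_2$ gives the Laurent polynomial obtained from $W$ of~\eqref{EQ:W} by the substitution $(x,y)\mapsto(x^{-1},y^{-1})$. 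Since the functional $[x^0y^0]$ is invariant under that substitution, and since $\chi=K^n$ is too, one may freely replace $\prod_{\alpha>0}(1-e^{-\alpha})$ by $W$ itself.

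Putting this together with Definition~\ref{defn:quad}, one gets $\mathcal S_k(n) = [x^0y^0]\bigl(K^n\,W\bigr)$ for all $n\geq 0$. Multiplying by $t^n$ and summing over $n$ --- legitimate termwise, since $[x^0y^0]$ commutes with the formal sum and $\sum_{n\geq 0}t^nK^n=1/(1-tK)$ as a power series in $t$ with coefficients in $\bC[x^{\pm1},y^{\pm1}]$ --- yields $C_k(t)=[x^0y^0]\bigl(W/(1-tK)\bigr)$, which is the assertion. I expect the only delicate points to be bookkeeping: choosing the torus coordinates so that the character of $V\oplus V^*$ is literally the $K$ of~\eqref{EQ:K}, and tracking the twist $(x,y)\mapsto(x^{-1},y^{-1})$ together with the sign implicit in the Weyl denominator so that the surviving Laurent polynomial is precisely the $W$ of~\eqref{EQ:W}; there should be no substantive obstacle, since this is the $SL(3)$ analogue of the $G_2$ computation already behind Definition~\ref{def:W}.
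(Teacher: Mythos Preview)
The paper states this lemma without proof, so there is no argument to compare against; your proposal supplies exactly the kind of justification the paper implicitly relies on when it cites the Gessel--Zeilberger principle \cite{MR1092920} before Definition~\ref{def:W}.

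Your argument is correct. In fundamental-weight coordinates $x=e^{\omega_1}$, $y=e^{\omega_2}$ the characters of $V$ and $V^*$ are indeed $x+y/x+1/y$ and $y+x/y+1/x$, so the character of $V\oplus V^*\oplus k\,\bC$ is precisely the $K$ of~\eqref{EQ:K}. The multiplicity form of the Weyl character formula gives $\mathcal S_k(n)=[x^0y^0]\bigl(K^n\prod_{\alpha>0}(1-e^{-\alpha})\bigr)$, and expanding the product over the three positive roots of $A_2$ yields $1 - y/x^2 - x/y^2 + 1/x^3 + 1/y^3 - 1/(x^2y^2)$, which is the image of the paper's $W$ under $(x,y)\mapsto(x^{-1},y^{-1})$, as you say. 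Since $K$ is visibly fixed by that involution and the constant-term functional is too, the swap to $W$ itself is legitimate, and the geometric summation in $t$ is routine.

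One small remark: the discrepancy you track via the involution can equally be absorbed by taking the opposite choice of positive roots (or equivalently writing $\sum_w\varepsilon(w)e^{\rho-w\rho}$ rather than $e^{w\rho-\rho}$), which gives the paper's $W$ directly; but your way of handling it is perfectly clean.
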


Let $C_2(n)$ be the $n$-th term of the sequence~\oeis{A216947} in the second
family. Marberg~\cite[Theorem 1.7]{Marberg2012} showed $C_2(n)$ is the
constant term $[x^0 y^0]$ of $W \tilde{K}^n$, where $\tilde{K} = K|_{k = 3}$,
the Laurent polynomials $K$ and $W$ are specified in~\eqref{EQ:K}
and~\eqref{EQ:W}.
By Lemma~\ref{lemma:quad2}, the sequence $\mathcal{S}_3$ is identical to  the sequence~\oeis{A216947}. 
Therefore, we have:

\begin{prop}~\cite[Theorem 1.7]{Marberg2012} \label{LEM:marberg}
The $n$-th term $C_2(n)$ of the sequence~$\mathcal{S}_3$ is given by $C_2(0) = 1, C_2(1) = 3$ and for $n \geq 0$: 
\begin{multline*}
(n + 5) (n + 6) \cdot C_2(n +2)  - 2 (5 n^2 + 36 n + 61) \cdot C_2(n + 1) \\
 + 9 (n+1) (n +4) \cdot C_2(n) = 0,
\end{multline*}
Equivalently, the associated generating function $\mathcal{C}(t) = \sum_{n \ge 0} C_2(n) t^n$ satisfies 
\begin{multline*}
72 \mathcal{C}(t)+4 (-61 + 117 t) {\frac {d}{dt}}\mathcal C(t) +2 (15 - 184 t + 234 t^2) {\frac {d^2}{dt^2}}\mathcal C(t) \\
+2 t (-6 + 7 t) (-1 + 9 t) {\frac {d^3}{dt^3}}\mathcal C(t) + (-1 + t) t^2 (-1 + 9 t) {\frac {d^4}{dt^4}}\mathcal C(t) = 0. 
\end{multline*}
\end{prop}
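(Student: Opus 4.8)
The plan is to split the Proposition into the two equivalent assertions it contains --- the second-order recurrence for $C_2(n)$ and the fourth-order differential equation for $\mathcal C(t)$ --- and to establish the recurrence first, then translate it. For the recurrence I would use the identification, recorded just above the statement, that $\mathcal S_3$ coincides with~\oeis{A216947}: by Lemma~\ref{lemma:quad2} specialized at $k=3$ we have $\mathcal C(t)=[x^0y^0]\bigl(W/(1-t\tilde K)\bigr)$ with $\tilde K=K|_{k=3}$, hence $C_2(n)=[x^0y^0]\,W\tilde K^n$, which is exactly the sequence treated in~\cite[Theorem~1.7]{Marberg2012}; the recurrence is then Marberg's. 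If one prefers a self-contained derivation, the same recurrence can be produced from the constant-term expression by creative telescoping, exactly as in the third proof of Theorem~\ref{thm:rc}: one writes $\mathcal C(t)$ as the algebraic residue of $W/\bigl(xy-t\,xy\,\tilde K\bigr)$, invokes Lipshitz's theorem~\cite{MR929767} to know it is D-finite, and applies the integration algorithm for bivariate rational functions of~\cite{Dwork} (or Chyzak's algorithm~\cite{KoutschanThesis} through the package of~\cite{Christoph2010}) to obtain an annihilating operator, from which the recurrence is read off. The two initial values are then fixed from Definition~\ref{defn:quad}: the multiplicity of the trivial representation in $\otimes^n\bigl(V\oplus V^*\oplus 3\,\bC\bigr)$ equals $1$ for $n=0$ and $3$ for $n=1$ (equivalently, these are the first two constant terms of $W\tilde K^n$, cf.\ Figure~\ref{fig:bx}).

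To pass from the recurrence to the differential equation I would apply the standard dictionary between P-recursive sequences and D-finite series. Writing $\theta=t\,\partial$ and using $\sum_{n\ge0} C_2(n+j)\,t^n=t^{-j}\bigl(\mathcal C(t)-\sum_{i<j}C_2(i)\,t^{i}\bigr)$ together with $n\leftrightarrow\theta$, the recurrence becomes, after multiplying through by $t^2$, an \emph{inhomogeneous} second-order equation $P(t,\theta)\,\mathcal C(t)=c(t)$ whose right-hand side $c(t)$ is the degree-one polynomial carrying the data $C_2(0)=1$, $C_2(1)=3$; differentiating twice annihilates $c(t)$ and yields a homogeneous operator of order four, which I would check coincides with the one displayed in the Proposition. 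It should be emphasized in the write-up that \emph{equivalently} here means ``carries the same information'': the order-four operator encodes both the recurrence and the two initial values, and recovering the order-two recurrence from it requires discarding the apparent singularities (at $t=0$ and $t=1$) introduced in this passage.

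I expect the only real obstacle to be certification rather than discovery. If the recurrence is taken from~\cite{Marberg2012} there is essentially nothing to prove beyond Lemma~\ref{lemma:quad2} and the routine translation above, so the risk is purely bookkeeping --- making sure the published recurrence and the displayed differential equation genuinely match after clearing powers of $t$ and shifts. If instead one runs the telescoping/residue route, the computation with a bivariate rational function of this size can be heavy, and one must verify that the operator returned is the intended one and not a proper left multiple; I would do this by confirming that sufficiently many initial terms of $C_2(n)$ --- obtained from Figure~\ref{fig:bx} and a short expansion of $W\tilde K^n$ --- satisfy the order-two recurrence, and that the order-four operator in the statement admits no right factor of order one, two, or three annihilating $\mathcal C(t)$, i.e.\ that its extraneous local solutions are confined to the apparent singularities. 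Once that minimality check is in place, the two formulations in the Proposition are identified and the proof is complete.
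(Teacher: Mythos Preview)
Your proposal is sound and in fact goes beyond what the paper does. In the paper this Proposition is not proved at all: it is simply attributed to Marberg~\cite[Theorem~1.7]{Marberg2012}, with the preceding paragraph explaining why $\mathcal S_3$ coincides with \oeis{A216947} (Marberg's constant-term formula for \oeis{A216947} matches the one given by Lemma~\ref{lemma:quad2} at $k=3$). Your primary route --- identify $\mathcal S_3$ with \oeis{A216947} via Lemma~\ref{lemma:quad2} and then quote Marberg's recurrence --- is exactly this argument.

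Where you differ is in supplying content the paper omits: an explicit translation from the order-two recurrence to the order-four homogeneous ODE (the paper just states both without deriving one from the other), and an alternative self-contained derivation via algebraic residues and creative telescoping, parallel to the third proof of Theorem~\ref{thm:rc}. Both additions are correct and useful. One small point: your final paragraph about checking that the order-four operator has no lower-order right factor annihilating $\mathcal C(t)$ is unnecessary here, since the Proposition only asserts that $\mathcal C(t)$ satisfies the displayed equation, not that the operator is minimal; the ``equivalently'' is, as you rightly note, about information content once the initial values are fixed.
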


Next, for $k = 0, 1, 2, 3$, we prove a uniform recurrence equation for the
sequence $\mathcal{S}_k$. It is given by a single formula with $k$ as a
parameter. Moreover, we show that $\mathcal{S}_0, \mathcal{S}_1,
\mathcal{S}_2, \mathcal{S}_3$ are identical to sequences in the second family.

\begin{thm}
For $k = 0, 1, 2, 3$, the $n$-th term $a(n)$ of the sequence $\mathcal{S}_k$ satisfies the following recurrence equation:
\begin{multline}
(k - 9) (k - 1) k^2 (n + 1) (n + 2) a(n) +  \\
2 k (n + 2) (2 k^2 n - 15 k n + 8 k^2 + 9 n - 56 k + 36
) a(n + 1) + \\  
( 6 k^2 n^2 + 54 k^2 n - 30 k n^2 + 114 k^2 + 9 n^2 - 254 k n + 81 n - 510 k
+ 162 )
a(n + 2) \\ + 
 2 
( 2 k n^2 + 24 k n - 5 n^2 + 70 k - 56 n -153)
a(n + 3) \\ 
 + (n + 7) (n + 8) a(n + 4) = 0 \label{EQ:uniformrec}
\end{multline} 
\end{thm}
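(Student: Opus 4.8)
The plan is to prove the uniform recurrence~\eqref{EQ:uniformrec} by working directly with the generating function of $\mathcal{S}_k$ as a constant term, following the same strategy used in the third proof of Theorem~\ref{thm:rc}. By Lemma~\ref{lemma:quad2}, the generating function $C_k(t) = \sum_{n\ge 0} a(n) t^n$ is the constant coefficient $[x^0y^0]$ of $W/(1-tK)$, where $K$ and $W$ are the explicit Laurent polynomials in~\eqref{EQ:K} and~\eqref{EQ:W} and $k$ appears linearly in $K$. Equivalently, $C_k(t)$ is the algebraic residue of $W/(xy - t\,xy\,K)$, so by~\cite{MR929767} it is D-finite in $t$ (for each fixed $k$, and in fact as a function of both $t$ and the parameter $k$). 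First I would apply the integration algorithm for multivariate rational functions of~\cite{Dwork} — exactly as in the third proof of Theorem~\ref{thm:rc} — but keeping $k$ as a symbolic parameter throughout, to produce a linear differential operator $L(t,k,\partial_t)$ annihilating $C_k(t)$ whose coefficients are polynomials in $t$ and $k$.

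Next I would reduce the order of the operator. In the $G_2$ case the order-$6$ operator $L_6$ factored as $QL_3$ with $Q$ having only the trivial power-series solution compatible with the initial data; I expect the analogous phenomenon here, namely that the raw operator produced by creative telescoping factors (over $\mathbb{Q}(k)[t][\partial_t]$) through a lower-order right factor $\tilde L$, and that a short argument using the first few coefficients $a(0)=1$, $a(1)=k+2$ (read off directly from $W K^0 = W$ and $WK^1$) pins down $C_k(t)$ as the unique solution of $\tilde L$. Converting $\tilde L$ into a recurrence by the standard dictionary $t\leftrightarrow$ shift, $t\partial_t \leftrightarrow$ multiplication by $n$, and clearing denominators, yields a linear recurrence with polynomial-in-$(n,k)$ coefficients. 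The final bookkeeping step is to check that this recurrence, after normalization, coincides with~\eqref{EQ:uniformrec}; this is a routine but lengthy polynomial identity in $n$ and $k$, which I would verify by computer algebra (and the supporting computation can be deposited alongside the other electronic material, cf.~\cite{ElectronicYZ}).

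Finally, to obtain the full statement — that the recurrence holds for $k=0,1,2,3$ with the stated interpretation, and that $\mathcal{S}_0,\dots,\mathcal{S}_3$ are the four sequences \oeis{A151366}, \oeis{A236408}, \oeis{A001181}, \oeis{A216947} of Figure~\ref{fig:bx} — I would specialize $k$ to each value: at $k=3$ this is Proposition~\ref{LEM:marberg} (Marberg's result), identifying $\mathcal{S}_3$ with \oeis{A216947}; for $k=0,1,2$ one checks that the specialized recurrence together with the initial conditions $a(0)=1$, $a(1)=k+2$ reproduces enough initial terms to match the OEIS sequences, and then the recurrence does the rest. One subtle point worth a remark: at $k=0$ and $k=1$ the leading coefficient $(k-9)(k-1)k^2(n+1)(n+2)$ of $a(n)$ degenerates (it vanishes identically when $k\in\{0,1\}$), so~\eqref{EQ:uniformrec} becomes a recurrence of lower effective order for those values — this is consistent rather than problematic, but the passage from the D-finite equation to the recurrence should be phrased so that these specializations are visibly legitimate.

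I expect the main obstacle to be the first step: running creative telescoping / the Picard--Fuchs computation with $k$ carried symbolically is substantially heavier than the specialized $G_2$ computation, and the intermediate operator may have large degree in $t$ and $k$. The saving grace is that the final answer~\eqref{EQ:uniformrec} is known, so once a D-finite equation in hand it suffices to verify — rather than discover — the reduction to order $4$ and the match with the target recurrence, and a verification of a stated identity is far cheaper than its derivation. An alternative route that sidesteps the parametric telescoping, should it prove too costly, is to invoke Lemma~\ref{LEM:binomialrep}: since $\mathcal{S}_k = \bt^{k-k_0}\mathcal{S}_{k_0}$ differ from one another by iterated binomial transforms, Lemma~\ref{lem:btg} gives $C_k(t) = \frac{1}{1-(k-k_0)t}\,C_{k_0}\!\left(\frac{t}{1-(k-k_0)t}\right)$; one could then derive the D-finite equation (hence recurrence) for a single convenient value $k_0$ — e.g.\ $k_0=3$ via Proposition~\ref{LEM:marberg} — and propagate it through the binomial-transform substitution, checking that the result is invariant in form and equals~\eqref{EQ:uniformrec} with $k$ restored. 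Either way the representation-theoretic content is already in place; what remains is an effective-computation-plus-verification argument.
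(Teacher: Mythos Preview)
Your primary approach---parametric creative telescoping on $W/(1-tK)$ with $k$ symbolic, followed by factorization and conversion to a recurrence---is valid, but it is not the route the paper takes. The paper does exactly what you describe as your \emph{alternative} fallback: it starts from Marberg's differential equation for $\mathcal{C}(t)$ (Proposition~\ref{LEM:marberg}), applies the binomial-transform substitution of Lemma~\ref{lem:btg} with $k$ kept as a formal parameter to obtain the generating function of $\mathcal{S}_k$, and then uses closure of D-finite functions under algebraic substitution to produce a parametric differential equation, which it converts to~\eqref{EQ:uniformrec}. This is considerably lighter than a fresh Picard--Fuchs computation with a symbolic parameter, and it bypasses the factorization step you anticipate; what you flag as a contingency is in fact the shortest path. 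Your primary approach does have the conceptual advantage of being independent of Marberg's prior computation, so it would give a self-contained derivation rather than a reduction.

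Two small points. First, $a(1)=k$, not $k+2$: the constant term of $WK$ is $k$, since no nonconstant monomial of $W$ pairs with a monomial of $K$ to give $x^0y^0$; this matches the first column of Figure~\ref{fig:bx}. Second, the identification of $\mathcal{S}_0,\dots,\mathcal{S}_3$ with the four OEIS sequences is not part of this theorem's statement; the paper handles that separately in Corollary~\ref{COR:identical}.
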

\begin{proof}
By Lemma~\ref{LEM:binomialrep}, sequences $\mathcal{S}_2, \mathcal{S}_1,
\mathcal{S}_0$ are the first, second, and third binomial transforms of $S_3$,
respectively. Thus, by Lemma~\ref{lem:btg}, the generating function of $S_k$
is
\[
\mathcal{A}(t) := \frac{1}{1 - k t} \cdot \mathcal{C} \left(\frac{t}{1- kt} \right) \ \ \text{ for } \ \ k = 0, 1, 2, 3,
\] 
where $\mathcal{C}(t)$ is the generating function of $S_3$. Regarding $k$ as a
parameter in the above expression, we deduce the differential equation for
$\mathcal A(t)$ in the claim by using Proposition~\ref{LEM:marberg} and
closure properties of D-finite functions. By converting the differential
equation for $\mathcal A(t)$, we get the corresponding recurrence equation for
the sequence $a(n)$, which is exactly the recurrence equation in the claim.
\end{proof}

In~\cite{Marberg2012}, Marberg proved that \oeis{A151366}, \oeis{A001181}, and
\oeis{A216947} are related by binomial transforms by using combinatorial
methods. Here, we give another proof by the representation theory of simple
Lie algebras.

\begin{cor} \label{COR:identical}
The sequences $S_0, S_1, S_2, S_3$ are identical to sequences in the second
family specified in Figure~\ref{fig:bx}. Moreover, sequences in the second
family are related by binomial transforms.
\end{cor}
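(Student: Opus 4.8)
The plan is to prove Corollary~\ref{COR:identical} by combining the uniform recurrence~\eqref{EQ:uniformrec} with Proposition~\ref{LEM:marberg}, and then checking initial conditions against Figure~\ref{fig:bx}. First I would specialize the recurrence~\eqref{EQ:uniformrec} to $k = 3$ and verify that it reduces (after dividing by the appropriate content and shifting indices) to the order-two recurrence of Proposition~\ref{LEM:marberg} for $C_2(n) = \mathcal{S}_3(n)$; together with $\mathcal{S}_3(0) = 1$, $\mathcal{S}_3(1) = 3$ this already identifies $\mathcal{S}_3$ with~\oeis{A216947}, since the recurrence of Proposition~\ref{LEM:marberg} together with two initial values determines the sequence uniquely. (The displayed order-four differential equation for $\mathcal{C}(t)$ in Proposition~\ref{LEM:marberg} is redundant here but serves as a cross-check.)

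Next I would handle $k = 0, 1, 2$ by two independent routes that should agree. The representation-theoretic route: by Lemma~\ref{LEM:binomialrep}, $\mathcal{S}_2 = \bt\,\mathcal{S}_3$, $\mathcal{S}_1 = \bt^2\,\mathcal{S}_3 = \bt\,\mathcal{S}_2$, and $\mathcal{S}_0 = \bt^3\,\mathcal{S}_3 = \bt\,\mathcal{S}_1$, since $V \oplus V^* \oplus k\,\bC$ differs from $V \oplus V^* \oplus (k+1)\,\bC$ by one copy of the trivial representation. So it suffices to show that the four sequences in Figure~\ref{fig:bx} are themselves successive binomial transforms of one another, from the bottom up: \oeis{A001181} $= \bt\,$\oeis{A236408}, \oeis{A236408} $= \bt\,$\oeis{A151366}, and \oeis{A216947} $= \bt^{?}$ of these — one must be careful here, because \emph{the rows of Figure~\ref{fig:bx} are not listed in the order} $\mathcal{S}_3, \mathcal{S}_2, \mathcal{S}_1, \mathcal{S}_0$; comparing leading terms shows $\mathcal{S}_3 \leftrightarrow$~\oeis{A216947} (both start $1, 3$), $\mathcal{S}_2 \leftrightarrow$~\oeis{A001181} (both start $1, 2$), $\mathcal{S}_1 \leftrightarrow$~\oeis{A236408} (both start $1, 1$), and $\mathcal{S}_0 \leftrightarrow$~\oeis{A151366} (both start $1, 0$). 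Thus the claim is that \oeis{A001181} $= \bt\,$\oeis{A216947}, \oeis{A236408} $= \bt^2\,$\oeis{A216947}, and \oeis{A151366} $= \bt^3\,$\oeis{A216947}. The clean way to verify this is via generating functions and Lemma~\ref{lem:btg}: compute $\mathcal{C}(t)$ for \oeis{A216947} from Proposition~\ref{LEM:marberg}, apply the substitution $t \mapsto t/(1+t)$ with prefactor $1/(1+t)$ (i.e.\ $\bt^{-1}$ run backward — actually $\bt$ corresponds to $k=1$ in Lemma~\ref{lem:btg}), and check that the resulting D-finite generating functions match those of~\oeis{A001181}, \oeis{A236408}, \oeis{A151366}, each of which can be pinned down by a short recurrence obtained by specializing~\eqref{EQ:uniformrec} to $k = 2, 1, 0$ respectively.

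To close the argument I would check, for each of $k = 0, 1, 2$, that the first few terms of $\mathcal{S}_k$ — computed either from the constant-term formula of Lemma~\ref{lemma:quad2} or from the binomial transform of the known terms $1, 3, 11, 49, 221, 1113$ of~\oeis{A216947} — agree with the corresponding row of Figure~\ref{fig:bx} up to the order of the recurrence~\eqref{EQ:uniformrec} specialized at that $k$ (order four, so enough initial terms are available). Since a linear recurrence plus enough initial conditions determines a sequence, this establishes $\mathcal{S}_k =$ the claimed OEIS sequence for $k = 0, 1, 2, 3$, and the second assertion (that these four OEIS sequences are related by binomial transforms) then follows immediately from Lemma~\ref{LEM:binomialrep} applied along $\mathcal{S}_3 \to \mathcal{S}_2 \to \mathcal{S}_1 \to \mathcal{S}_0$. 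The main obstacle I anticipate is purely bookkeeping: getting the \emph{direction} of the binomial transform and the matching between the unordered rows of Figure~\ref{fig:bx} and the sequences $\mathcal{S}_k$ exactly right, since an off-by-one in $k$ or a transposed row would silently break the generating-function identities; a sign/shift check on the first three terms of each sequence settles it unambiguously.
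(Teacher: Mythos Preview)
Your overall strategy matches the paper's: specialize the uniform recurrence~\eqref{EQ:uniformrec} at each $k$, match the resulting operator against the known OEIS recurrence, and then compare finitely many initial terms. The paper does exactly this, phrasing the operator comparison as ``the specialized recurrence operator is a \emph{left multiple} of the OEIS operator,'' and then deduces the binomial-transform statement from Lemma~\ref{LEM:binomialrep}.

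Two concrete issues in your write-up, though:

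\textbf{Direction of the binomial transform.} You write $\mathcal{S}_2 = \bt\,\mathcal{S}_3$, $\mathcal{S}_1 = \bt\,\mathcal{S}_2$, etc. Lemma~\ref{LEM:binomialrep} says $\ba_{V\oplus\bC} = \bt\,\ba_V$, so adding a copy of $\bC$ applies $\bt$; hence $\mathcal{S}_{k+1} = \bt\,\mathcal{S}_k$, i.e.\ $\mathcal{S}_3 = \bt\,\mathcal{S}_2$, not the reverse. Numerically, $\bt(1,3,11,49,\dots) = (1,4,18,\dots) \neq (1,2,6,\dots)$, whereas $\bt(1,2,6,22,\dots) = (1,3,11,49,\dots)$. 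So your claimed chain \oeis{A001181} $=\bt\,$\oeis{A216947}, etc., is inverted. You anticipated exactly this kind of bookkeeping slip and your proposed three-term check would catch it immediately, but the text as written is wrong.

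\textbf{``Reduces'' versus ``left multiple.''} At $k=3$ the coefficient of $a(n+4)$ in~\eqref{EQ:uniformrec} is $(n+7)(n+8)\neq 0$, so the order-four recurrence does not literally collapse to the order-two recurrence of Proposition~\ref{LEM:marberg} by dividing out content. What is true (and what the paper uses) is that the order-four operator factors with the order-two Marberg operator on the right; equivalently, every solution of Marberg's recurrence satisfies the specialized~\eqref{EQ:uniformrec}. Your initial-terms check still closes the argument, but the mechanism you describe for the $k=3$ step is not quite the right one.
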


\begin{proof}

In~\eqref{EQ:uniformrec}, by setting $k$ to $0, 1, 2, 3$, we find that the
corresponding recurrence operators are left multiples~\cite[page
618]{Chen2016} of those of~\oeis{A216947},~\oeis{A001181},~\oeis{A236408},
and~\oeis{A151366} specified in OEIS. It implies that $S_0, S_1, S_2, S_3$
satisfy the same recurrence equations as the sequences in the second family.
To verify they are the same sequences, we just need to check finitely many
initial terms. The details of the verification can be find
in~\cite{ElectronicYZ}. Since $S_k$'s are related by binomial transforms, so
are sequences in the second family.
\end{proof}



\begin{thebibliography}{10}

\bibitem{MR3588720}
Alin Bostan, Fr\'{e}d\'{e}ric Chyzak, Mark van Hoeij, Manuel Kauers, and Lucien
  Pech.
\newblock \href{https://doi.org/10.1016/j.ejc.2016.10.010}{Hypergeometric expressions for generating functions of walks with
  small steps in the quarter plane}.
\newblock {\em European J. Combin.}, 61:242--275, 2017.

\bibitem{Dwork}
Alin Bostan, Pierre Lairez, and Bruno Salvy.
\newblock \href{https://doi.org/10.1145/2465506.2465935}{Creative telescoping for rational functions using the
  {G}riffiths-{D}work method}.
\newblock In {\em I{SSAC} 2013}, pages 93--100. ACM, New York, 2013.

\bibitem{ElectronicYZ}
Alin Bostan, Jordan Tirrell, Bruce~W. Westbury, and Yi~Zhang.
\newblock \href{https://yzhang1616.github.io/ct/ct.html}{Supplementary
  electronic material} to the article ``On sequences associated to the
  invariant theory of rank two simple lie algebras'', 2019.

\bibitem{Bousquet-Melou2005}
Mireille Bousquet-M\'{e}lou and Guoce Xin.
\newblock \href{https://eudml.org/doc/125954}{On partitions avoiding 3-crossings}.
\newblock {\em S\'{e}m. Lothar. Combin.}, 54:Art. B54e, 21, 2005/07.

\bibitem{Chen2016}
Shaoshi Chen, Manuel Kauers, and Michael~F. Singer.
\newblock \href{https://doi.org/10.1016/j.jsc.2015.11.001}{Desingularization of {O}re operators}.
\newblock {\em J. Symbolic Comput.}, 74:617--626, 2016.

\bibitem{MR2272140}
William Y.~C. Chen, Eva Y.~P. Deng, Rosena R.~X. Du, Richard~P. Stanley, and
  Catherine~H. Yan.
\newblock \href{https://doi.org/10.1090/S0002-9947-06-04210-3}{Crossings and nestings of matchings and partitions}.
\newblock {\em Trans. Amer. Math. Soc.}, 359(4):1555--1575, 2007.

\bibitem{MR638077}
R.~Gaskell and R.~T. Sharp.
\newblock \href{https://doi.org/10.1063/1.525178}{Generating functions for {$G\sb{2}$} characters and subgroup
  branching rules}.
\newblock {\em J. Math. Phys.}, 22(12):2736--2739, 1981.

\bibitem{MR1092920}
Ira~M. Gessel and Doron Zeilberger.
\newblock \href{https://doi.org/10.2307/2159560}{Random walk in a {W}eyl chamber}.
\newblock {\em Proc. Amer. Math. Soc.}, 115(1):27--31, 1992.

\bibitem{Jordan2019}
Juan~B. Gil and Jordan~O. Tirrell.
\newblock \href{https://doi.org/10.1016/j.disc.2019.111705}{A simple bijection for enhanced, classical, and 2-distant $k$-noncrossing partitions}.
\newblock {\em Discrete Mathematics}, 2019.
\newblock In press.

\bibitem{KoutschanThesis}
Christoph Koutschan.
\newblock {\em
 \href{https://www.risc.jku.at/publications/download/risc_3886/thesisKoutschan.pdf}{Advanced
  applications of the holonomic systems approach}}.
\newblock PhD thesis, Johannes Kepler University Linz, 2009.

\bibitem{Christoph2010}
Christoph Koutschan.
\newblock  \href{https://www3.risc.jku.at/research/combinat/software/ergosum/RISC/HolonomicFunctions.html}{HolonomicFunctions}
  user's guide.
\newblock {\em RISC Report Series}, 2010.

\bibitem{MR1265145}
Greg Kuperberg.
\newblock \href{https://doi.org/10.1142/S0129167X94000048}{The quantum {$G_2$} link invariant}.
\newblock {\em Internat. J. Math.}, 5(1):61--85, 1994.

\bibitem{Kuperberg1997}
Greg Kuperberg.
\newblock \href{https://doi.org/10.1007/BF02101184}{Spiders for rank {$2$} {L}ie algebras}.
\newblock {\em Comm. Math. Phys.}, 180(1):109--151, 1996.

\bibitem{MR3779614}
Zhicong Lin.
\newblock \href{https://doi.org/10.1016/j.ejc.2018.01.002}{Restricted inversion sequences and enhanced 3-noncrossing partitions}.
\newblock {\em European J. Combin.}, 70:202--211, 2018.

\bibitem{MR929767}
Leonard Lipshitz.
\newblock \href{https://doi.org/10.1016/0021-8693(88)90166-4}{The diagonal of a {$D$}-finite power series is {$D$}-finite}.
\newblock {\em J. Algebra}, 113(2):373--378, 1988.

\bibitem{Marberg2012}
Eric Marberg.
\newblock \href{https://www.combinatorics.org/ojs/index.php/eljc/article/view/v20i4p6}{Crossings and nestings in colored set partitions}.
\newblock {\em Electron. J. Combin.}, 20(4):Paper 6, 30, 2013.

\bibitem{StienstraBeukers85}
Jan Stienstra and Frits Beukers.
\newblock \href{https://doi.org/10.1007/BF01455990}{On the {P}icard-{F}uchs equation and the formal {B}rauer group of
  certain elliptic {$K3$}-surfaces}.
\newblock {\em Math. Ann.}, 271(2):269--304, 1985.

\bibitem{Westbury2005}
Bruce~W. Westbury.
\newblock \href{https://doi.org/10.1007/s10801-006-0041-4}{Enumeration of non-positive planar trivalent graphs}.
\newblock {\em J. Algebraic Combin.}, 25(4):357--373, 2007.

\bibitem{Zeilberger1991}
Doron Zeilberger.
\newblock \href{https://doi.org/10.1016/S0747-7171(08)80044-2}{The method of creative telescoping}.
\newblock {\em J. Symbolic Comput.}, 11(3):195--204, 1991.

\end{thebibliography}

\end{document}